\theoremstyle{plain}
\newtheorem{theorem}{Theorem}
\newtheorem{lemma}[theorem]{Lemma}
\newtheorem{corollary}[theorem]{Corollary}
\theoremstyle{remark}
\newtheorem*{example}{Example}
\newtheorem*{acknow}{Acknowledgements}
\newcommand{\abs}[1]{\lvert#1\rvert}
\newcommand\norm[1]{\lVert#1\rVert}
\newcommand\iin{\!\in\!}
\newcommand\conv{\star}
\newcommand\rmd{\mathrm{d}}
\newcommand\pair[1]{\langle\, #1\,\rangle}
\newcommand\ppair[1]{(\,#1\,)}
\newcommand{\lt}{L}
\newcommand{\rt}{R}
\newcommand\sub{\subseteq}
\newcommand{\inv}{^{-1}}
\newcommand\lone{\mathrm{L}^1}
\newcommand\Zlone{\mathrm{ZL}^1}
\newcommand\linfty{\mathrm{L}^\infty}
\newcommand{\luc}{\mathrm{LUC}}
\newcommand{\uc}{\mathrm{U_b}}
\newcommand{\gluc}{G^\mathrm{LUC}}
\newcommand{\M}{\mathrm{M}}
\newcommand{\F}{\mathcal{F}}
\renewcommand{\H}{\mathcal{H}}
\begin{document}

\title[Uniform equicontinuity]%
{Uniformly equicontinuous sets, right multiplier topology,
        and continuity of convolution}

\author{Matthias Neufang}

\address{Carleton University, School of Mathematics and Statistics,
Ottawa, Ontario K1S 5B6, Canada \newline\indent
Universit\'{e} Lille 1 - Sciences et Technologies,
UFR de Math\'{e}matiques,
Laboratoire de Math\'{e}matiques Paul Painlev\'{e} -- UMR CNRS 8524,
59655 Villeneuve d'Ascq Cedex, France}

\email{mneufang@math.carleton.ca, Matthias.Neufang@math.univ-lille1.fr}

\author{Jan Pachl}

\address{Fields Institute, 222 College Street, Toronto, Ontario M5T 3J1, Canada}

\email{jpachl@fields.utoronto.ca}

\author{Pekka Salmi}

\address{Department of Mathematical Sciences, University of Oulu,
        PL~3000, FI-90014 Oulun yliopisto, Finland}

\email{pekka.salmi@iki.fi}

\maketitle


\section{Introduction}
    \label{s:intro}

The dual space of the C*-algebra $\uc(X)$ of
bounded uniformly continuous com\-plex-valued functions on a
uniform space $X$ carries several natural topologies.
One of these is the topology of
uniform convergence on bounded uniformly equicontinuous sets,
or the UEB topology for short.
In the particular case when $X = G$ is a topological group
and $\uc(X) = \luc(G)$, the C*-algebra of
bounded left uniformly continuous functions,
the UEB topology plays a significant role in the continuity
of the convolution product on $\luc(G)^*$.
In this paper we derive a useful characterisation of
bounded uniformly equicontinuous sets on locally compact groups.
Then we demonstrate that for every locally compact group
$G$ the UEB topology on the space $\M(G)$ of finite Radon measures on $G$
coincides with the right multiplier topology, $\M(G)$
viewed as the multiplier algebra of the $\lone$ group algebra.
In this sense the UEB topology is a generalisation to arbitrary
topological groups of the multiplier topology for locally compact
groups. In the final section we prove results about UEB continuity of
convolution on $\luc(G)^\ast$ (even for topological groups not
necessarily locally compact).

Many of our proofs use factorisation techniques such as
Cohen's factorisation theorem (see for example
\cite[Theorem I.11.10]{bonsall-duncan}).
In fact some of our results
may be interpreted as strenghtenings of Cohen's factorisation.
Theorem~\ref{thm:equi-factor} gives a factorisation
of a bounded equiuniformly continuous set of functions in $\luc(G)$
by a single function in $\lone(G)$.
We shall also need the factorisation theorem due to Neufang
\cite{neufang:centre}.


\section{Preliminaries}
\label{s:prel}

Let $X$ be a uniform space and let $\uc(X)$ be the C*-algebra of
bounded uniformly continuous complex-valued functions on $X$.
Say that $\F\sub \uc(X)$ is a \emph{UEB set}
if it is bounded in the sup norm and uniformly equicontinuous.
Say that a net $({m}_\alpha)$ in $\uc(X)^\ast$ converges to $0$
in the \emph{UEB topology} if
\[
\sup_{f\in \F} | \pair{{m}_\alpha,f}| \to 0
\]
for every UEB set $\F\sub \uc(X)$.

For a topological group $G$ (not necessarily locally compact),
$\luc(G)$ is the space of the bounded functions $f$ on $G$ such that
for every $\epsilon>0$ there is a neighbourhood $U$ of
the identity in $G$ with
\[
\abs{f(s) - f(t)}<\epsilon\quad \text{ whenever }st\inv\in U.
\]
Say that a set $\F\sub\luc(G)$ is \emph{equi-$\luc$}
if $\F$ is bounded in the sup norm and
for every $\epsilon>0$ there is a neighbourhood $U$ of
the identity in $G$ such that
\[
\abs{f(s) - f(t)}<\epsilon\qquad \text{whenever }f\in\F,\, st\inv\in U.
\]

It is well known that $\luc(G)$ is the space of the bounded functions
that are uniformly continuous with respect to a suitable uniformity on $G$
(the \emph{right uniformity} in the terminology of
Hewitt and Ross~\cite{hewittRoss:aha}).
In the following we consider $G$ as a uniform space with this uniformity,
so that $\luc(G)=\uc(G)$ and UEB sets are exactly
the equi-$\luc$ sets.

Equi-$\luc$ subsets of $\luc(G)$ arise naturally in the study
of convolution on the dual $\luc(G)^\ast$ of $\luc(G)$.
On general uniform spaces, UEB sets of functions have an important role
in the theory of so called uniform measures ---
see~\cite{pachl:umconvolution}\cite{pachl:umbook} and the references there.

We identify the space $\M(G)$ of finite Radon measures on
a topological group $G$ with a subspace of $\luc(G)^\ast$
by means of the duality $\pair{{m},f}=\int f\, \rmd{m}$
for ${m}\iin\M(G)$ and $f\iin\luc(G)$.
(We shall use the brackets $\pair{\cdot,\cdot}$
to denote Banach space duality in other cases too.)

Denote by $\lt_x f\iin\luc(G)$ and $\rt_x f\iin\luc(G)$
the left and the right translate of $f\iin\luc(G)$
by $x\iin G$; that is,
$\lt_x f(s)=f(xs)$ and $\rt_x f(s)=f(sx)$ for $s\iin G$.
More generally, for ${n}\iin\luc(G)^\ast$,
denote by $\rt_{n} f$ the function $s \mapsto {n}(\lt_s f)$.
Obviously $\rt_{n}=\rt_x$ when ${n}$ is the point mass at $x\iin G$.

Note that $\rt_{n} f\iin\luc(G)$
whenever ${n}\iin\luc(G)^\ast$ and $f\iin\luc(G)$
(this well-known fact is also a special case of Lemma~\ref{lemma:rtranslate}
below).
It follows that we may define the \emph{convolution}
${m}\conv{n}\iin\luc(G)^\ast$
for ${m},{n}\iin\luc(G)^\ast$ by
\[
{m}\conv{n}(f)={m}(\rt_{n} f) \;\text{ for }\; f\iin\luc(G) .
\]
Csisz{\'a}r~\cite{csiszar:conv} and Lau~\cite{lau:tcentre},
among others, have studied this notion of convolution on $\luc(G)^\ast$.
It is also a particular case of the general notion of \emph{evolution}
as defined by Pym~\cite{pym:conv}.
When ${m}, {n} \iin \M(G)$, this definition of ${m}\conv{n}$ agrees
with the standard definition of convolution
of measures~\cite[19.1]{hewittRoss:aha}.
We note that the spectrum $\gluc$ of $\luc(G)$ is closed
under the convolution product of $\luc(G)^*$.

For the remainder of this section, suppose that
$G$ is a locally compact group,
and denote by $\rmd t$ its left Haar measure.
We identify $\lone(G)$ with the measures in $\M(G)$ that
are absolutely continuous with respect to $\rmd t$,
and the convolution in $\luc(G)^\ast$ restricted to $\lone(G)$
agrees with the usual convolution of functions
as defined in \cite[20.10]{hewittRoss:aha}:
\[
f\conv g(s) = \int f(st)g(t\inv)\,\rmd t \qquad
(f,g\iin\lone(G), s\iin G).
\]
The same formula defines also the convolution $f\conv g$ for
$f\iin\lone(G)$ and $g\iin\linfty(G)$ (\cite[20.16]{hewittRoss:aha}).

The dual space of $\lone(G)$ is identified with the
space $\linfty(G)$ of essentially bounded measurable functions on $G$.
The convolution on $\lone(G)$ defines the right action
of $\lone(G)$ on $\linfty(G)$ via
\[
\pair{h\cdot f, g} = \pair{h, f\conv g}\qquad (f,g\in\lone(G), h\in\linfty(G)).
\]
Defining $\widetilde f(s) = f(s\inv)/\Delta(s)$
for $f\iin\lone(G)$, where $\Delta$ is the modular function of~$G$,
we can write $h\cdot f = \widetilde f \conv h$.
It then follows from Cohen's factorisation theorem
that $\luc(G) = \linfty(G)\cdot\lone(G) = \lone(G)\conv\linfty(G)$.

Next we have the left action of $\linfty(G)^\ast$ on $\linfty(G)$:
\[
\pair{m\cdot h, f} = \pair{m, h\cdot f}
\qquad (f\iin\lone(G), h\iin\linfty(G), m\iin\linfty(G)^\ast).
\]
In fact this action depends only on the restriction of
$m$ to $\luc(G)\sub\linfty(G)$ (since $h\cdot f\in\luc(G)$),
so this also gives an action of $\luc(G)^\ast$ on $\linfty(G)$,
which we denote the same way. Moreover, the action leaves $\luc(G)$
invariant. It is easily seen that in fact
$m\cdot h = \rt_m h$ for every $h\in\luc(G)$ and $m\in\luc(G)^\ast$.

Finally, we define the left Arens product on $\linfty(G)^\ast$
by
\[
\pair{n\cdot m, h} = \pair{n, m\cdot h} \qquad
                       (h\in\linfty(G), m,n\in\linfty(G)^\ast).
\]
It follows from the remarks in the preceding paragraph that
$\luc(G)^\ast$ equipped with convolution
is a quotient Banach algebra of $\linfty(G)^\ast$
equipped with the left Arens product.


\section{Characterisation of equi-$\luc$ sets on
         locally compact groups}
    \label{s:characterisation}

The following result characterises equi-$\luc$ sets on
locally compact groups as the bounded sets that can be
simultaneously factorised by one function in $\lone(G)$.

\begin{theorem} \label{thm:equi-factor}
Let $G$ be a locally compact group and let $\F$ be a set
of functions on $G$.
Then $\F$ is equi-$\luc$
if and only if there is $f\in\lone(G)$ and a bounded set
$\H\sub \luc(G)$ such that
\[
\F = f \conv \H.
\]
\end{theorem}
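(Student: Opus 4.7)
The plan is to prove the two directions separately.

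For the forward implication, assume $\F = f \conv \H$ with $f \iin \lone(G)$ and $\H \sub \luc(G)$ bounded. I will use the identity $\lt_x(f \conv h) = (\lt_x f) \conv h$, which follows from a routine change of variables in the convolution integral. This gives
\[
\norm{\lt_x(f \conv h) - f \conv h}_\infty \le \norm{\lt_x f - f}_1 \sup_{h\in\H} \norm{h}_\infty .
\]
Since $\norm{\lt_x f - f}_1 \to 0$ as $x \to e$ (a standard property of $\lone(G)$) and the supremum is finite, the estimate is uniform over $\H$, so $\F$ is equi-$\luc$; sup-norm boundedness of $\F$ comes for free from $\norm{f \conv h}_\infty \le \norm{f}_1 \norm{h}_\infty$.

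For the converse, my strategy is to derive the factorisation from a set-form of the Cohen--Hewitt factorisation theorem, in two stages. Stage one is to establish uniform approximation by an approximate identity. Choose a bounded approximate identity $(e_i)$ in $\lone(G)$ consisting of nonnegative densities with $\int e_i \, \rmd t = 1$ whose supports shrink to $\{e\}$. Given $\epsilon>0$, equi-$\luc$ of $\F$ produces a symmetric neighbourhood $U$ of $e$ such that $\abs{f(s)-f(t)}<\epsilon$ whenever $st\inv\iin U$ and $f\iin\F$. Once the support of $e_i$ is contained in $U$, the substitution $u=st$ in the convolution integral yields
\[
e_i \conv f(s) - f(s) = \int e_i(u)\,[f(u\inv s) - f(s)]\, \rmd u ,
\]
and since $(u\inv s)s\inv = u\inv \iin U$ (using symmetry of $U$), the integrand is bounded by $\epsilon\, e_i(u)$, giving $\norm{e_i\conv f - f}_\infty \le \epsilon$ \emph{uniformly} in $f\iin\F$.

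Stage two is to invoke the set-version of Cohen--Hewitt for the left Banach $\lone(G)$-module $\luc(G)$: a bounded subset of the module on which a bounded approximate identity acts with uniform norm convergence admits a common left factor, so there exist $f_0\iin\lone(G)$ and a bounded set $\H\sub\luc(G)$ with $\F = f_0 \conv \H$. The main obstacle will be applying the correct power-form of Cohen's theorem -- one that yields a single factor $f_0$ valid simultaneously for every element of $\F$, rather than an element-by-element factorisation. This simultaneous factorisation is precisely the strengthening of Cohen mentioned in the introduction, and its hypothesis is exactly the uniform convergence produced in stage one.
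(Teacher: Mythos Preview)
Your argument is correct and takes a genuinely different, more unified route than the paper.  For the converse the paper splits into two cases: when $G$ is compact it uses Arzel\`a--Ascoli to see that an equi-$\luc$ set is norm-precompact in $\luc(G)$ and then applies a Cohen-type factorisation for compact subsets of a Banach module (Theorem~\ref{thm:compact-factor}); when $G$ is non-compact it invokes a separate factorisation lemma of Neufang~\cite{neufang:centre} to write each $f_i\in\F$ as $x_i\cdot h$ with $x_i\in\gluc$ and a \emph{single} $h\in\luc(G)$, and only then applies ordinary Cohen to $h$.  You bypass this dichotomy by recognising that equi-$\luc$ is exactly the condition that a standard bounded approximate identity in $\lone(G)$ acts \emph{uniformly} on $\F$, and then feeding this into a set-form of Cohen's theorem.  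That set-form is legitimate: it follows from the single-element Cohen theorem applied in the left $\lone(G)$-module $\ell^\infty(\F,\luc(G))$ of bounded $\luc(G)$-valued families indexed by $\F$, where your Stage-one estimate says precisely that $e_i\cdot\iota\to\iota$ for the tautological family $\iota(f)=f$.  One caution: the ``strengthening of Cohen mentioned in the introduction'' that you invoke \emph{is} Theorem~\ref{thm:equi-factor} itself, so to avoid circularity you should point instead to the module trick just described (or to an external reference for multiple factorisation).  Your approach buys a single clean argument valid for every locally compact $G$; the paper's approach buys an illustration of two different factorisation mechanisms and exposes the structural role of $\gluc$ in the non-compact case.
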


It is easy to check that every set of the form $f\conv\H$ with
$f\in\lone(G)$ and bounded $\H\sub\luc(G)$ is equi-$\luc$,
so one direction of the theorem is clear.

We shall prove the converse separately for compact groups and for
non-compact ones. First we shall derive the compact case from a
general factorisation result for norm-compact subsets of
left Banach $A$-modules. Then we prove the non-compact case
using a different factorisation result
due to Neufang \cite{neufang:centre}. It is noteworthy that
the two factorisation results are based on opposite ideas:
the non-compact case relies heavily on non-compactness
and the compact case on compactness.

We base the compact case to the following theorem. It is actually
a known result due to Craw \cite[Corollary]{craw:factorisation}, but
we present a different proof using Cohen's factorisation theorem
directly. Craw obtained his result as a corollary
to a factorisation theorem for null sequences in Fr\'echet algebras
(similar to Varopoulos's factorisation of null sequences in
Banach algebras). In our case,
the Varopoulos result is an immediate corollary.

\begin{theorem} \label{thm:compact-factor}
Let  $A$ be a Banach algebra and
$X$ a left Banach $A$-module. Suppose that $A$ has
a bounded approximate identity for the action on $X$.
If $K$ is a norm-compact subset of $X$
then there is $a$ in $A$ such that
for every $x$ in $K$ there is $y$ in $X$ such that
\[
x = ay.
\]
\end{theorem}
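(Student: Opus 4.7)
My strategy is to gather all elements of $K$ into a single element of an auxiliary Banach module, and then to apply Cohen's factorization theorem to that one element.

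Concretely, let $Y = C(K, X)$ be the Banach space of norm-continuous maps $f \colon K \to X$, equipped with the supremum norm. Endow $Y$ with the pointwise left $A$-action $(a \cdot f)(k) = a\, f(k)$, so that $Y$ becomes a left Banach $A$-module. The inclusion $\iota \colon K \hookrightarrow X$ is then an element of $Y$ that carries the information of all points of $K$ simultaneously.

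The crux is to transfer the bounded approximate identity from $X$ to $Y$: if $(e_\alpha)$ is a bounded approximate identity in $A$ for the action on $X$, then for every $f \in Y$ the image $f(K)$ is a norm-compact subset of $X$, and a short covering argument (cover $f(K)$ by finitely many $\epsilon$-balls around points $x_1,\ldots,x_n$, and combine boundedness of $(e_\alpha)$ with $\|e_\alpha x_i - x_i\| \to 0$) gives $\|e_\alpha f - f\|_\infty \to 0$. Compactness of $K$ (hence of $f(K)$) is used essentially in this step, and I expect this transfer to be the only nontrivial point of the argument.

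Once $(e_\alpha)$ is known to be a bounded approximate identity for the action on $Y$, Cohen's factorization theorem applied to the single element $\iota \in Y$ yields $a \in A$ and $g \in Y$ with $\iota = a \cdot g$. Evaluating at $x \in K$ gives $x = a\,g(x)$, so setting $y = g(x) \in X$ provides the required factorization.
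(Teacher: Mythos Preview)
Your proof is correct and follows the same overall strategy as the paper: encode the compact set $K$ as a single element of an auxiliary left Banach $A$-module, verify via a finite-cover argument that the given bounded approximate identity works for this module, and then apply Cohen's factorisation once.

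The difference lies in the choice of auxiliary module. The paper uses the space $\mathcal{S}$ of totally bounded double sequences in $X$, selects a countable double sequence $(x_{n,m})$ dense in $K$, factorises it as $(a\,y_{n,m})$, and then recovers an arbitrary $x\in K$ by a limit argument (choosing a subsequence of the $y_{n,m_n}$ converging to some $y$ with $x=ay$). Your choice $Y=C(K,X)$ with the inclusion $\iota\in Y$ is more direct: once Cohen gives $\iota=a\cdot g$, evaluation at $x$ yields $x=a\,g(x)$ with no limiting step required. The paper's route has the minor advantage that the auxiliary module does not depend on $K$ (only the element chosen in it does), and it makes the Varopoulos factorisation of null sequences visibly a special case; your route buys a shorter, cleaner argument for the statement at hand.
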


\begin{proof}
A routine argument shows that
the collection $\mathcal S$ of totally bounded
double sequences on $X$ forms a Banach space.
We define an action of $A$ on $\mathcal S$
coordinatewise:
\[
a(y_{n,m}) = (ay_{n,m}).
\]
The action makes $\mathcal S$ a left Banach $A$-module.
A bounded approximate identity $(e_i)$ for the action of $A$ on $X$ is
also a bounded approximate identity for the action on $\mathcal S$.
Indeed, given $(y_{n,m})\in\mathcal S$ and $\epsilon>0$,
choose $x_1$, \ldots, $x_k$ in $X$ such that
$\epsilon$-balls centred at $x_j$'s cover $(y_{n,m})$.
Then we can choose $i_0$ such that
$\|e_ix_j-x_j\|<\epsilon$ for every $i\ge i_0$ and $j=1$,\ldots, $k$.
Choosing a suitable $x_j$ for each $y_{n,m}$ we have
\[
\|e_iy_{n,m}-y_{n,m}\|\le \|e_iy_{n,m}-e_ix_j\|+\|e_ix_j-x_j\|+\|x_j-y_{n,m}\|
 \le \|e_i\|\epsilon+2\epsilon
\]
for every $i\ge i_0$. So $(e_i)$ is a bounded approximate identity
also for the action on $\mathcal S$.

Since $K$ is norm-compact, there is
a double sequence $(x_{n,m})$ in $K$
such that for every $x$ in $K$,
\[
\|x-x_{n,m}\|<1/n
\]
for some $m$. Then $(x_{n,m})\in \mathcal{S}$, so
we can apply Cohen's factorisation to find $a\in A$ and
$(y_{n,m})\in\mathcal{S}$ such that
\[
x_{n,m} = ay_{n,m}.
\]
Given $x\in K$ there is, by construction,
a sequence $(x_{n,m_n})_{n=1}^\infty$ that converges to $x$.
By factorisation $x_{n,m_n} = ay_{n,m_n}$. The sequence
$(y_{n,m_n})$ is totally bounded and hence has a subsequence
converging to some $y$ in $X$. It follows that $x = ay$.
\end{proof}

\begin{proof}[Proof of the compact case of Theorem~\ref{thm:equi-factor}]
Due to compactness of $G$ an equi-$\luc$ set on $G$ is totally bounded
(i.e.\ precompact), so its closure is a norm-compact subset of $\luc(G)$.
Since $\luc(G)$ is a left Banach $\lone(G)$-module and
$\lone(G)$ has a bounded approximate identity for the action,
we may apply Theorem~\ref{thm:compact-factor}.
\end{proof}

To prove the non-compact case of Theorem~\ref{thm:equi-factor},
it is convenient to use the notation of Arens actions
introduced in Section~\ref{s:prel}.

\begin{proof}[Proof of the non-compact case of Theorem~\ref{thm:equi-factor}]
Let $\F=\{f_i\}$ be an equi-$\luc$ set of functions on a non-compact group $G$.
By Lemma~2.2 of \cite{neufang:centre}, we have a factorisation
\[
f_i = x_i\cdot h
\]
where $h\in\luc(G)$ and $\{x_i\}\sub \gluc$.
By Cohen's factorisation theorem, we can
factorise $h = g\cdot u$ where $u\in\lone(G)$
and $g\in\luc(G)$.
Then
\[
x_i\cdot h = (x_i\cdot g)\cdot u = \widetilde{u} \conv (x_i\cdot g).
\]
Putting $h_i = x_i\cdot g$ and $f = \widetilde{u}$
we get that for every $i$
\[
f_i = f \conv h_i.
\]
This is the required factorisation.
\end{proof}


\section{UEB topology and the right multiplier topology}

In this section we specialise to the case of locally compact
groups and show that on the measure algebra $\M(G)\sub \luc(G)^\ast$
the UEB topology coincides with the right multiplier topology.
Here we view $\M(G)$ as the algebra of right multipliers of $\lone(G)$
(through Wendel's theorem \cite{wendel:multipliers}).
The \emph{right multiplier topology} is the topology
induced by the seminorms $m\mapsto \norm{f \conv m}$ where $f$ runs through
the elements of $\lone(G)$.

\begin{theorem} \label{thm:ueb=rm}
Let $G$ be a locally compact group.
The UEB topology on $\luc(G)^*$
coincides with the topology generated by the seminorms
$m\mapsto \norm{f \conv m}$, $f \in\lone(G)$.
In particular, the right multiplier topology of $\M(G)$
agrees with the UEB topology inherited from $\luc(G)^*$.
\end{theorem}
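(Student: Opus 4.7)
My plan is to base the argument on a single duality identity:
$$
(f\conv m)(h) \;=\; \pair{m,\ \widetilde{f}\conv h}
\qquad (f\iin\lone(G),\ h\iin\luc(G),\ m\iin\luc(G)^*).
$$
I would first establish this by unfolding the Arens-module formulas of Section~\ref{s:prel}: the chain
$$
(f\conv m)(h) = f(\rt_m h) = \pair{\rt_m h, f} = \pair{m\cdot h, f} = \pair{m, h\cdot f} = \pair{m, \widetilde{f}\conv h}
$$
uses only $m\cdot h = \rt_m h$ for $h\iin\luc(G)$ and $h\cdot f = \widetilde{f}\conv h$, both recorded in the preliminaries. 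This is essentially free once those facts are in place.

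For the direction UEB $\Rightarrow$ right multiplier, I would fix $f\iin\lone(G)$ and rewrite
$$
\|f\conv m\| \;=\; \sup_{h\iin\luc(G),\, \|h\|_\infty\le 1}|(f\conv m)(h)| \;=\; \sup_{\|h\|_\infty\le 1}\bigl|\pair{m,\widetilde{f}\conv h}\bigr|,
$$
interpreting $\|f\conv m\|$ as the dual norm on $\luc(G)^*$ (which agrees with the $\lone(G)$- and $\M(G)$-norms on their respective subspaces, since $C_c(G)\sub\luc(G)$ is norming). The key point is then that the family $\F_f := \{\widetilde{f}\conv h : h\iin\luc(G),\ \|h\|_\infty\le 1\}$ is itself a UEB set: it is sup-bounded by $\|\widetilde{f}\|_1$, and the estimate $|\widetilde{f}\conv h(s) - \widetilde{f}\conv h(t)| \le \|\lt_{st\inv}\widetilde{f} - \widetilde{f}\|_1$ (after a standard change of variables) vanishes as $st\inv\to e$ by norm-continuity of left translation in $\lone(G)$. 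Hence UEB-convergence of $(m_\alpha)$ forces $\|f\conv m_\alpha\|\to 0$.

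For the reverse direction, the factorisation Theorem~\ref{thm:equi-factor} does the heavy lifting. Given any UEB set $\F\sub\luc(G)$, write $\F = g\conv\H$ with $g\iin\lone(G)$ and $\H\sub\luc(G)$ norm-bounded. Applying the duality identity with $f$ replaced by $\widetilde{g}$ (and using $\widetilde{\widetilde{g}}=g$),
$$
\sup_{\phi\iin\F}|\pair{m,\phi}| = \sup_{h\iin\H}|\pair{m, g\conv h}| = \sup_{h\iin\H}|\pair{\widetilde{g}\conv m, h}| \le \Bigl(\sup_{h\iin\H}\|h\|_\infty\Bigr)\|\widetilde{g}\conv m\|,
$$
so $\|\widetilde{g}\conv m_\alpha\|\to 0$ forces UEB-convergence on $\F$. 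The two directions together give the claimed equality of topologies on $\luc(G)^*$; the ``in particular'' clause for $\M(G)$ is immediate since Wendel's identification aligns the $\luc(G)^*$-norm with the usual $\M(G)$- and $\lone(G)$-norms. The real content sits in Theorem~\ref{thm:equi-factor}; the main bookkeeping obstacle is purely the verification of the duality identity and that $\F_f$ is UEB, neither of which is deep.
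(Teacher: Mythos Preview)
Your argument is correct and follows essentially the same route as the paper's proof: both directions hinge on the duality identity $\pair{f\conv m,h}=\pair{m,\widetilde{f}\conv h}$, with the forward direction using that $\widetilde{f}\conv B$ is equi-$\luc$ and the reverse direction invoking Theorem~\ref{thm:equi-factor} to factorise an arbitrary equi-$\luc$ set. You are simply more explicit than the paper about justifying the duality identity via the Arens-module formulas, about why $\F_f$ is equi-$\luc$ (the $\lone$ translation-continuity estimate), and about the norm agreement on $\M(G)$ --- none of which the paper spells out.
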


\begin{proof}
This is rather immediate from the factorisation result
in Theorem~\ref{thm:equi-factor}.
Suppose first that $m_\alpha\to 0$ in $\luc(G)^*$ with respect
to the UEB topology. Let $B$ denote the unit ball of $\luc(G)$.
Then, for every $f$ in $\lone(G)$,
\[
\norm{f \conv m_\alpha} = \sup_{h\in B} \;\abs{\pair{f\conv m_\alpha,h}}
          = \sup_{h\in B} \;\abs{\pair{m_\alpha,\widetilde{f}\conv h}}.
\]
Since $\widetilde{f}\conv B$ is an equi-$\luc$ set,
it follows that $\norm{f\conv m_\alpha} \to 0$.

Conversely, suppose that $m_\alpha\to 0$ in the topology
generated by the seminorms $m\mapsto \norm{f\conv m}$, $f\in\lone(G)$.
Now if $\F$ is an equi-$\luc$ set, then by
Theorem~\ref{thm:equi-factor} we have $\F = f \conv \H$ with
$f\iin\lone(G)$ and $\H\sub \luc(G)$ bounded. Therefore
\[
\sup_{g\in \F} \;\abs{\pair{m_\alpha,g}} =
\sup_{h\in \H} \;\abs{\pair{\widetilde{f}\conv m_\alpha, h}}
\le \norm{\widetilde{f}\conv m_\alpha} \sup_{h\in \H} \norm{h} \to 0.
\]
\end{proof}


\section{Continuity of convolution in $\luc(G)^\ast$}

In this section we prove several results that illustrate the role of
equi-$\luc$ sets in the study of convolution on the dual
$\luc(G)^\ast$ of $\luc(G)$. There are more results of this kind,
presented in a more general framework, in~\cite{pachl:umconvolution}
and~\cite{pachl:umbook}.

\begin{lemma}
    \label{lemma:RMeasures}
Let $G$ be a topological group, ${m}\iin\M(G)$,
and let $\F$ be an equi-$\luc$ subset of $\luc(G)$.
Then the restriction of ${m}$ to $\F$ is continuous in the
$G$-pointwise topology.
\end{lemma}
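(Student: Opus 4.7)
The plan is to verify net continuity of $m$ on $\F$ directly. Suppose $(f_\alpha)$ is a net in $\F$ converging pointwise on $G$ to some $f \iin \F$; I must show $\pair{m, f_\alpha} \to \pair{m, f}$. Let $M = \sup_{g \iin \F}\norm{g}_\infty$, which is finite by definition of equi-$\luc$, and fix $\epsilon > 0$.

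First I use inner regularity of the finite Radon measure $\abs{m}$ to choose a compact $K \sub G$ with $\abs{m}(G \sm K) < \epsilon$. Since $\F$ is uniformly bounded, the contribution off $K$ is controlled uniformly in $\alpha$ by
\[
\Bigl\lvert \int_{G \sm K}(f_\alpha - f)\,\rmd m \Bigr\rvert \le 2M\epsilon.
\]

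The heart of the argument is to upgrade pointwise convergence on $G$ to uniform convergence on $K$. The equi-$\luc$ hypothesis supplies, for each $\eta > 0$, a neighbourhood $U$ of the identity such that $\abs{g(s) - g(t)} < \eta$ for every $g \iin \F$ whenever $st\inv \iin U$. Rewriting $st\inv \iin U$ as $t \iin U\inv s$ exhibits $\F$ as an equicontinuous family in the ordinary topological sense at every point of $G$. A classical Arzelà--Ascoli argument on the compact set $K$ --- cover $K$ by finitely many neighbourhoods $U\inv x_j$, use pointwise convergence at the finite set $\{x_j\}$ to obtain $\alpha_0$ with $\abs{f_\alpha(x_j) - f(x_j)} < \eta$ beyond $\alpha_0$ for all $j$, and apply the three-term triangle inequality together with the equicontinuity bounds on $f_\alpha$ and $f$ --- then yields $\sup_{t \in K}\abs{f_\alpha(t) - f(t)} \le 3\eta$ for $\alpha \ge \alpha_0$.

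Choosing $\eta = \epsilon/(1 + \abs{m}(G))$ and combining with the tail estimate gives $\abs{\pair{m, f_\alpha - f}} \le (2M + 3)\epsilon$ eventually, so the arbitrariness of $\epsilon$ completes the proof. The only nontrivial point is the pointwise-to-uniform passage on $K$; once one verifies that the one-sided equi-$\luc$ condition (with $st\inv$) translates into ordinary topological equicontinuity --- a short manipulation of the group operation --- this is a standard Arzelà--Ascoli argument, so the main ``obstacle'' is bookkeeping rather than substance.
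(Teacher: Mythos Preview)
Your proof is correct and follows essentially the same approach as the paper's. The paper's one-sentence argument --- that the $G$-pointwise and compact--open topologies coincide on equi-$\luc$ sets --- packages exactly the two ingredients you spell out: the Arzel\`a--Ascoli upgrade from pointwise to uniform convergence on compacta via equicontinuity, and the continuity of a finite Radon measure for the compact--open topology via inner regularity.
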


\begin{proof}
The $G$-pointwise topology and the compact--open topology coincide on every
equi-$\luc$ subset of $\luc(G)$.
\end{proof}

\begin{lemma}
    \label{lemma:semiu}
Let $G$ be a topological group and $\F\sub\luc(G)$ an equi-$\luc$ set.
\begin{enumerate}
\item
    \label{lemma:semiu:right}
The set $\{ \rt_x f \mid f\iin\F, x\iin G \}$ is equi-$\luc$.
\item
    \label{lemma:semiu:left}
For every $x\iin G$ the set $\{ \lt_x f \mid f\iin\F \}$ is equi-$\luc$.
\item
    \label{lemma:semiu:SIN}
If $G$ is a SIN group then the set
$\{ \lt_x f \mid f\iin\F, x\iin G \}$ is equi-$\luc$.
\end{enumerate}
\end{lemma}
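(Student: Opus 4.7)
The plan is to attack each of the three parts by a direct computation with the basic uniformity condition defining equi-$\luc$ sets, exploiting the identity $st\inv$ under left and right translations.

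For part \eqref{lemma:semiu:right}, the key observation is that right translation is an \emph{isometry of the right uniformity}: for any $x\iin G$,
\[
(sx)(tx)\inv = s x x\inv t\inv = s t\inv.
\]
So given $\epsilon>0$, I would pick a neighbourhood $U$ of the identity witnessing the equi-$\luc$ property for $\F$; the same $U$ then witnesses it for $\{\rt_x f \mid f\iin\F, x\iin G\}$, since $st\inv\iin U$ forces $\abs{\rt_x f(s)-\rt_x f(t)} = \abs{f(sx)-f(tx)}<\epsilon$ uniformly in $f$ and in~$x$. Sup-norm boundedness is clear.

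For part \eqref{lemma:semiu:left}, left translation is \emph{not} an isometry of the right uniformity, but for a \emph{fixed} $x\iin G$ conjugation by $x$ is a homeomorphism of $G$ fixing the identity. Given $\epsilon>0$, I would again pick $U$ witnessing equi-$\luc$ for $\F$ and set $V := x\inv U x$, a neighbourhood of the identity. Then $st\inv\iin V$ implies $(xs)(xt)\inv = x(st\inv)x\inv \iin U$, so $\abs{\lt_x f(s)-\lt_x f(t)} = \abs{f(xs)-f(xt)}<\epsilon$ for all $f\iin\F$. This gives the equi-$\luc$ property of $\{\lt_x f \mid f\iin\F\}$.

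For part \eqref{lemma:semiu:SIN}, the point is precisely that the dependence of $V=x\inv Ux$ on $x$ in part \eqref{lemma:semiu:left} is the only obstruction to uniformity over $x\iin G$, and the SIN assumption removes it. I would use the definition of a SIN group: there is a neighbourhood basis at the identity consisting of sets invariant under inner automorphisms. Given $\epsilon>0$, pick a neighbourhood $U$ witnessing equi-$\luc$ for $\F$, then shrink $U$ to a conjugation-invariant neighbourhood $U_0\sub U$. Then $x\inv U_0 x = U_0$ for every $x\iin G$, and the argument of part \eqref{lemma:semiu:left} applies uniformly in $x$, giving the equi-$\luc$ property of $\{\lt_x f \mid f\iin\F, x\iin G\}$.

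No step presents a serious obstacle; the only thing to watch is to quote the correct form of the SIN property (existence of a \emph{basis} of conjugation-invariant neighbourhoods of the identity) so that $U_0$ can be chosen inside a prescribed $U$.
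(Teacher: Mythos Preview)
Your argument is correct and, for parts~\eqref{lemma:semiu:right} and~\eqref{lemma:semiu:left}, matches the paper's proof (which is terser but invokes exactly the same computations). For part~\eqref{lemma:semiu:SIN} there is a small difference of viewpoint: you extend the argument of part~\eqref{lemma:semiu:left} by choosing a conjugation-invariant neighbourhood $U_0\sub U$, whereas the paper reduces to part~\eqref{lemma:semiu:right} by invoking the left--right symmetry of SIN groups (i.e.\ that the left and right uniformities coincide, so an equi-$\luc$ set is also equi-$\mathrm{RUC}$ and left translations play the role that right translations do in part~\eqref{lemma:semiu:right}). Both routes are short and essentially equivalent; yours is slightly more self-contained, while the paper's emphasises the structural reason the SIN hypothesis enters.
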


\begin{proof}
Part~(\ref{lemma:semiu:right}) follows directly from the definition of
equi-$\luc$ sets.
Part~(\ref{lemma:semiu:left}) follows from the continuity of group
operations: If $U$ is a neighbourhood of the identity in $G$,
then so is $xUx\inv$ for every $x\iin G$.
Part~(\ref{lemma:semiu:SIN}) follows from part~(\ref{lemma:semiu:right})
by the left--right symmetry.
\end{proof}

The next lemma generalises part~(\ref{lemma:semiu:right})
of Lemma~\ref{lemma:semiu}.

\begin{lemma}
    \label{lemma:rtranslate}
Let $G$ be a topological group, $\F\sub\luc(G)$ an equi-$\luc$ set,
and $B\sub\luc(G)^\ast$ a norm-bounded set.
Then the set
$\{ \rt_{n} f \mid f\iin\F, {n}\iin B\}$ is equi-$\luc$.
\end{lemma}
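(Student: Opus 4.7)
The plan is to verify the two parts of the equi-$\luc$ definition directly, using the defining formula $\rt_{n} f(s) = {n}(\lt_s f)$ and the duality between $\luc(G)^\ast$ and $\luc(G)$. Let $M = \sup_{{n}\in B} \norm{{n}}$ and $C = \sup_{f\in\F}\norm{f}_\infty$, both finite by hypothesis.

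First I would handle sup-norm boundedness. For any $s\iin G$, $\norm{\lt_s f}_\infty = \norm{f}_\infty \le C$, so $\abs{\rt_{n} f(s)} = \abs{{n}(\lt_s f)} \le \norm{{n}}\,\norm{\lt_s f}_\infty \le MC$, uniformly in $f\iin\F$ and ${n}\iin B$.

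Next I would verify the equi-uniform-continuity. The key identity is
\[
\rt_{n} f(s) - \rt_{n} f(t) = {n}(\lt_s f - \lt_t f),
\]
so $\abs{\rt_{n} f(s) - \rt_{n} f(t)} \le M\,\norm{\lt_s f - \lt_t f}_\infty$, and it suffices to control $\norm{\lt_s f - \lt_t f}_\infty$ uniformly in $f\iin\F$. The crucial observation is that $(su)(tu)\inv = st\inv$, so the difference $\abs{(\lt_s f)(u) - (\lt_t f)(u)} = \abs{f(su) - f(tu)}$ is governed by $st\inv$ and not by $u$. Given $\epsilon>0$, the equi-$\luc$ property of $\F$ supplies a neighbourhood $U$ of the identity so that $\abs{f(a)-f(b)}<\epsilon$ whenever $f\iin\F$ and $ab\inv\iin U$; applying this with $a=su$, $b=tu$ yields $\norm{\lt_s f - \lt_t f}_\infty \le \epsilon$ for all $f\iin\F$ whenever $st\inv\iin U$. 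Combining, $\abs{\rt_{n} f(s) - \rt_{n} f(t)} \le M\epsilon$ uniformly in $f\iin\F$ and ${n}\iin B$, which gives the desired equi-$\luc$ condition.

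There is no real obstacle here; the argument is essentially a two-line computation once one notices that left-translation is an isometry of $\luc(G)$ and that the right uniformity is precisely engineered so that $(su)(tu)\inv$ does not depend on $u$. Part~(\ref{lemma:semiu:right}) of Lemma~\ref{lemma:semiu} is recovered by specialising to point masses in $B$.
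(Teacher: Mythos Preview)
Your proof is correct and essentially identical to the paper's: both verify sup-norm boundedness via $\abs{\rt_{n} f(s)}=\abs{{n}(\lt_s f)}\le\norm{{n}}\norm{f}$, and both handle equicontinuity by bounding $\abs{\rt_{n} f(s)-\rt_{n} f(t)}\le\norm{{n}}\norm{\lt_s f-\lt_t f}_\infty$ and then using $(su)(tu)\inv=st\inv$ together with the equi-$\luc$ hypothesis on $\F$. Even your closing remark about specialising to point masses matches the paper's framing of the lemma as a generalisation of Lemma~\ref{lemma:semiu}(\ref{lemma:semiu:right}).
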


\begin{proof}
Write $c=\sup \{ \norm{{n}} \mid {n}\iin B \}$.
For $f\iin\F$, ${n}\iin B$ and $s\iin G$ we have
\[
\abs{\rt_{n} f (s)} = \abs{{n}(\lt_s f)} \leq \norm{{n}} \cdot \norm{\lt_s f}
= \norm{{n}} \cdot \norm{f} \leq c\norm{f},
\]
so that the set $\{ \rt_{n} f \mid f\iin\F, {n}\iin B\}$ is bounded in
the sup norm.

Take any $\epsilon>0$, and let $U$ be a neighbourhood of the identity in $G$
such that $\abs{f(s) - f(t)}<\epsilon$ whenever $f\iin\F$, $st\inv\iin U$.
If $st\inv\iin U$ and $f\iin\F$, then
\[
\abs{\lt_s f (x) - \lt_t f (x)}  = \abs{f(sx)-f(tx)}
    < \epsilon \;\text{ for }\; x\iin G
\]
and
\[
\abs{\rt_{n} f (s) - \rt_{n} f (t)} = \abs{{n}(\lt_s f - \lt_t f)}
\leq \norm{{n}} \cdot \norm{\lt_s f - \lt_t f} \leq c \epsilon
\]
which completes the proof.
\end{proof}

With the convolution product $\luc(G)^\ast$ is a Banach algebra,
and in particular the convolution is jointly norm-continuous
on $\luc(G)^\ast$. On the other hand, the convolution is far from
being jointly continuous in the $\luc(G)$-weak topology.
By Theorem~1 in~\cite{salmi:jcont},
if $G$ is not compact then the convolution is not jointly $\luc(G)$-weakly
continuous even when restricted to bounded sets in $\M(G)$.

Nevertheless, we now show that the convolution on $\M(G)$ is well-behaved
in the UEB topology.
Recall from section~\ref{s:intro} that this is the topology on $\luc(G)^\ast$
of uniform convergence on the equi-$\luc$ subsets of $\luc(G)$.

The proof of the following theorem does not use any properties
of Radon measures other than Lemma~\ref{lemma:RMeasures}.
Thus the same proof establishes a more general result in which $\M(G)$
is replaced by the space of uniform measures.
For this and other generalisations,
see~\cite{pachl:umconvolution}\cite{pachl:umbook}.

\begin{theorem}
    \label{th:cconv}
Let $G$ be a topological group, let $B\sub\luc(G)^\ast$ be a norm-bounded set,
${m}_0\iin\luc(G)^\ast$ and ${n}_0\iin B$.
Then the mapping $\ppair{{m},{n}}\mapsto{m}\conv{n}$ from $\luc(G)^\ast\times B$
to $\luc(G)^\ast$ is jointly continuous at $\ppair{{m}_0,{n}_0}$
in the UEB topology in each of these two cases:
\begin{itemize}
\item[\textit{(a)}]
${m}_0\iin\M(G)$,
\item[\textit{(b)}]
$G$ is a SIN group.
\end{itemize}
\end{theorem}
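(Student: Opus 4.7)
The plan is to fix an equi-$\luc$ set $\F \sub \luc(G)$ and $f \iin \F$, and to exploit the algebraic decomposition
\[
m \conv n - m_0 \conv n_0 = (m - m_0) \conv n + m_0 \conv (n - n_0),
\]
which, paired with $f$, reads $\pair{m \conv n - m_0 \conv n_0,\, f} = \pair{m - m_0,\, \rt_n f} + \pair{m_0,\, \rt_{n - n_0} f}$. The first summand is controllable uniformly under either hypothesis: Lemma~\ref{lemma:rtranslate} applied to the norm-bounded set $B$ and to $\F$ shows that $\F_1 := \{\rt_n f \col n \iin B,\, f \iin \F\}$ is equi-$\luc$, so UEB convergence $m \to m_0$ forces $\sup_{f \iin \F,\, n \iin B}\abs{\pair{m - m_0, \rt_n f}} \to 0$. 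Everything thus reduces to making $\sup_{f \iin \F}\abs{\pair{m_0, \rt_{n - n_0} f}}$ small as $n \to n_0$ in UEB along $B$.

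Under hypothesis~(b), Lemma~\ref{lemma:semiu}(\ref{lemma:semiu:SIN}) yields the equi-$\luc$ set $\mathcal{G} := \{\lt_s f \col s \iin G,\, f \iin \F\}$ \emph{globally}. UEB convergence $n \to n_0$ then gives $\sup_{s, f}\abs{(n - n_0)(\lt_s f)} = \sup_f \norm{\rt_{n - n_0} f}_\infty \to 0$, so the crude estimate $\abs{\pair{m_0, \rt_{n - n_0} f}} \le \norm{m_0}\,\norm{\rt_{n - n_0} f}_\infty$ closes this case at once.

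Under hypothesis~(a), $m_0$ is a finite Radon measure and I would rewrite the second summand as
\[
\pair{m_0, \rt_{n - n_0} f} = \int_G (n - n_0)(\lt_s f) \, \rmd m_0(s).
\]
Set $M := \sup_\F \norm{\cdot}_\infty$ and $C := \sup_B \norm{\cdot}$. Given $\epsilon > 0$, inner regularity of $\abs{m_0}$ lets me pick a compact $K \sub G$ with $2CM\abs{m_0}(G \sm K) < \epsilon/2$, which bounds the tail integral. For the integral over $K$, the only real difficulty---and, I expect, the main obstacle---is to verify that $\mathcal{G}_K := \{\lt_s f \col s \iin K,\, f \iin \F\}$ is equi-$\luc$, since no global analogue is available outside the SIN setting. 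This reduces to showing that, for every neighborhood $U$ of the identity, there is a neighborhood $V$ with $sVs\inv \sub U$ for all $s \iin K$; joint continuity of conjugation together with a tube-lemma argument applied to the open preimage of $U$ under $(s, x) \mapsto sxs\inv$ in $K \times G$ supplies such a $V$. Once $\mathcal{G}_K$ is equi-$\luc$, UEB convergence $n \to n_0$ makes $\sup_{s \iin K,\, f \iin \F}\abs{(n - n_0)(\lt_s f)}$ small, bounding the $K$-integral by $\epsilon/2$ and completing the proof.
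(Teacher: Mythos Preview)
Your proof is correct and parallels the paper's closely: you use the same algebraic decomposition, handle the first summand via Lemma~\ref{lemma:rtranslate} exactly as the paper does, and treat case~(b) identically. The only real divergence is in case~(a). The paper introduces auxiliary functions $g_\alpha(x) = \sup_{f\in\F}\abs{\rt_{n_\alpha}f(x) - \rt_{n_0}f(x)}$, observes that $\{g_\alpha\}$ is an equi-$\luc$ net converging pointwise to~$0$, and then invokes Lemma~\ref{lemma:RMeasures} (Radon measures are $G$-pointwise continuous on equi-$\luc$ sets, since the pointwise and compact--open topologies agree there) to conclude $\abs{m_0}(g_\alpha)\to 0$. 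You instead unpack this directly: inner regularity reduces to a compact $K$, and your tube-lemma argument shows $\{\lt_s f : s\in K,\,f\in\F\}$ is equi-$\luc$, so UEB convergence of $n$ controls the $K$-integral. Your route is more explicit and self-contained; the paper's packaging via Lemma~\ref{lemma:RMeasures} is terser and, as the authors remark, carries over verbatim from $\M(G)$ to the larger class of uniform measures, where the abstract continuity property replaces the hands-on tightness estimate.
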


For commutative groups, a result similar to case (a) (for uniform
measures) was proved by Caby~\cite{caby:conv}.
Other theorems of this kind were also proved by
Csisz{\'a}r~\cite[Th.1]{csiszar:conv} and Pym~\cite[2.2]{pym:conv65}.
(There is a gap in the proof of Theorem~1 in~\cite{csiszar:conv}; to
correct it, a boundedness condition should be added to the statement
of the theorem.)

\begin{proof}
Let $({m}_\alpha)_{\alpha\in A}$ be a net in $\luc(G)^\ast$ and
$({n}_\alpha)_{\alpha\in A}$ a net in $B$.
Let ${m}_\alpha \to {m}_0$ and ${n}_\alpha \to {n}_0$ in the UEB topology.
Take any equi-$\luc$ set $\F\sub\luc(G)$.

Define
$g_\alpha(x)=\sup \{\abs{\rt_{{n}_\alpha}{f}(x) - \rt_{{n}_0}{f}(x)}
\mid {f\in\F}\}$
for $\alpha\iin A$, $x\iin G$.
It follows from Lemma~\ref{lemma:rtranslate} that
the set $\{g_\alpha\mid \alpha\iin A\}$ is equi-$\luc$.
On the other hand, for every $x\iin G$ the set
$\{ \lt_x f \mid f\iin\F \}$ is equi-$\luc$ by Lemma~\ref{lemma:semiu},
and since ${n}_\alpha \to {n}_0$ in the UEB topology, $g_\alpha(x) \to 0$.

Now
\begin{align*}
&\abs{{m}_\alpha\conv{n}_\alpha(f) - {m}_0\conv{n}_0(f)}
 = \abs{{m}_\alpha(\rt_{{n}_\alpha}{f}) - {m}_0(\rt_{{n}_0}{f})}           \\
&\qquad \leq \abs{{m}_\alpha(\rt_{{n}_\alpha}{f}) - {m}_0(\rt_{{n}_\alpha}{f})}
 + \abs{{m}_0(\rt_{{n}_\alpha}{f}) - {m}_0(\rt_{{n}_0}{f})}.
 \end{align*}
The set $\{\rt_{{n}_\alpha}{f}\mid f\iin\F, \;\alpha\iin A\}$
is equi-$\luc$ by Lemma~\ref{lemma:rtranslate}, so
\[
\lim_\alpha \;\sup_{f\in\F} \;
\abs{{m}_\alpha(\rt_{{n}_\alpha}{f}) - {m}_0(\rt_{{n}_\alpha}{f})} = 0.
\]
Therefore,
\begin{align*}
\lim_\alpha \;\sup_{f\in\F} \;
\abs{{m}_\alpha\conv{n}_\alpha(f) - {m}_0\conv{n}_0(f)}
& \leq \lim_\alpha \;\sup_{f\in\F} \;
\abs{{m}_\alpha(\rt_{{n}_\alpha}{f}) - {m}_0(\rt_{{n}_\alpha}{f})} \\
& \quad + \lim_\alpha \;\sup_{f\in\F} \;
\abs{{m}_0(\rt_{{n}_\alpha}{f}) - {m}_0(\rt_{{n}_0}{f})} \\
& \leq \lim_\alpha
\;\abs{{m}_0} ( g_\alpha )
= 0
\end{align*}
where the last equality holds by Lemma~\ref{lemma:RMeasures}
in case~(a).
In case~(b) we have $\norm{g_\alpha} \to 0$ by part~\ref{lemma:semiu:SIN}
of Lemma~\ref{lemma:semiu}, and thus again $\abs{{m}_0} ( g_\alpha ) \to 0$.
\end{proof}

\begin{corollary}
    \label{cor:jointMG}
Convolution is jointly UEB continuous on bounded subsets of $\M(G)$
for every topological group $G$.
\end{corollary}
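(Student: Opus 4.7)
The plan is to deduce this immediately from case~(a) of Theorem~\ref{th:cconv}. Let $B\sub\M(G)$ be norm-bounded. First I would observe that $B$ is also norm-bounded as a subset of $\luc(G)^\ast$, since the embedding $\M(G)\hookrightarrow\luc(G)^\ast$ given by $\pair{{m},f}=\int f\,\rmd{m}$ is norm-decreasing (the functional norm of ${m}$ on $\luc(G)$ is bounded by the total variation of ${m}$).

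Next, at any point $\ppair{{m}_0,{n}_0}\iin B\times B$ the hypotheses of case~(a) are satisfied automatically: ${m}_0\iin\M(G)$ because $B\sub\M(G)$, and ${n}_0$ belongs to the norm-bounded set $B\sub\luc(G)^\ast$. Theorem~\ref{th:cconv}(a), applied with this $B$, then yields joint UEB continuity of $\ppair{{m},{n}}\mapsto{m}\conv{n}$ at $\ppair{{m}_0,{n}_0}$ as a map from $\luc(G)^\ast\times B$ into $\luc(G)^\ast$. Restricting the first coordinate from $\luc(G)^\ast$ to $B$ preserves this continuity, and since $\ppair{{m}_0,{n}_0}$ was arbitrary, the convolution is jointly UEB continuous on $B\times B$.

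I do not anticipate any real obstacle here: the corollary is essentially a rewording of case~(a) of Theorem~\ref{th:cconv} once both factors are constrained to a bounded subset of $\M(G)$, because both conditions in that case---namely ${m}_0\iin\M(G)$ and ${n}_0$ belonging to a norm-bounded set---are then in force simultaneously. The only small point worth checking explicitly is the norm comparison between the total-variation norm on $\M(G)$ and the operator norm on $\luc(G)^\ast$, which is routine.
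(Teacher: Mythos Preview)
Your proposal is correct and matches the paper's intent: the corollary is stated without proof precisely because it is an immediate specialisation of case~(a) of Theorem~\ref{th:cconv}, exactly as you describe. The extra remark about the norm comparison is harmless but unnecessary, since the paper simply identifies $\M(G)$ with a subspace of $\luc(G)^\ast$ from the outset.
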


\begin{corollary}
    \label{cor:jointSIN}
Convolution is jointly UEB continuous on bounded subsets of $\luc(G)^\ast$
for every SIN group $G$.
\end{corollary}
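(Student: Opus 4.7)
The plan is to deduce Corollary~\ref{cor:jointSIN} as an immediate consequence of case~(b) of Theorem~\ref{th:cconv}. Let $B_1, B_2 \sub \luc(G)^\ast$ be norm-bounded sets. I fix an arbitrary pair $({m}_0, {n}_0) \iin B_1 \times B_2$ and apply Theorem~\ref{th:cconv}(b) with $B := B_2$: since $G$ is a SIN group, the convolution map is jointly UEB continuous at $({m}_0, {n}_0)$ as a map $\luc(G)^\ast \times B_2 \to \luc(G)^\ast$. Restricting the first coordinate to $B_1$ preserves continuity at that point, and since $({m}_0, {n}_0)$ was arbitrary, joint UEB continuity on all of $B_1 \times B_2$ follows.

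The point worth highlighting is that case~(b) of Theorem~\ref{th:cconv}, unlike case~(a), places no hypothesis on ${m}_0$ beyond membership in $\luc(G)^\ast$. This is possible because the SIN assumption, via part~(\ref{lemma:semiu:SIN}) of Lemma~\ref{lemma:semiu}, upgrades the pointwise convergence $g_\alpha(x) \to 0$ to norm convergence $\norm{g_\alpha} \to 0$; the latter trivially yields $\abs{{m}_0}(g_\alpha) \to 0$ for every ${m}_0 \iin \luc(G)^\ast$, without any need for the Radon-measure-based Lemma~\ref{lemma:RMeasures}. This is exactly why only the second factor $B_2$ has to be bounded in the theorem, and consequently why boundedness of both factors is enough for the corollary.

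There is no genuine obstacle here: all the substantive work was absorbed into Theorem~\ref{th:cconv}. The only sanity check is that joint continuity at every point of $B_1 \times B_2$ coincides with joint continuity on the product set equipped with the subspace UEB topologies, which is immediate from the definition of joint continuity.
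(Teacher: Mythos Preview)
Your proposal is correct and is exactly how the paper intends the corollary to follow: the paper gives no separate proof, since the result is immediate from case~(b) of Theorem~\ref{th:cconv}. Your additional commentary on why the SIN hypothesis removes the need for Lemma~\ref{lemma:RMeasures} is accurate and matches the final lines of the proof of Theorem~\ref{th:cconv}.
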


It is known~\cite{pachl:umconvolution}\cite{pachl:umbook}
that in $\M(G)$ the UEB topology and the $\luc(G)$-weak topology
coincide on the positive cone, on $\norm{\cdot}$ spheres and on
$\luc(G)$-weakly compact sets.
Thus from Corollary~\ref{cor:jointMG} we immediately obtain
that the convolution is jointly $\luc(G)$-weakly continuous
when restricted to the positive cone or a $\norm{\cdot}$ sphere
or a $\luc(G)$-weakly compact set in $\M(G)$.
In particular, $\conv$ is jointly $\luc(G)$-weakly sequentially
continuous on $\M(G)$.

\begin{example}
In this example we exhibit a topological group $G$
such that the convolution is not jointly UEB continuous on bounded subsets of
$\luc(G)^\ast$. In particular, the statement of  Theorem~\ref{th:cconv}
is not true if both conditions (a) and (b) are removed.
In fact, for this group $G$ there are ${m}_0\iin\gluc$ and a sequence
of elements $y_j\iin G$ that UEB converges to $y\iin\gluc$, and yet
${m}_0\conv y_j$ does not even converge to ${m}_0\conv y$ in the
$\luc(G)$-weak topology.

Let $G$ be the group of increasing homeomorphisms of the interval
$[0,3]$ onto itself.
The group operation is composition of maps.
The topology of $G$ is induced by the right-invariant metric
\[
\Delta(x,y):= \sup_{t\in[0,3]} \abs{x(t)-y(t)} \;\text{ for }\; x,y\iin G.
\]

Define $y_j\iin G$ for $j=1,2,\dotsc$ by
\[
y_j(t) := \left\{ \begin{array}{lll}
                     t/j    & \text{when} & 0\leq t \leq 2  \\
                     \frac{2}{j} + (3 - \frac{2}{j})(t-2) &
                     \text{when} & 2 < t \leq 3.
                  \end{array} \right.
\]
The sequence $\{y_j\}_j$ is Cauchy in the metric,
hence it has a UEB limit $y\iin G^\luc$.

Define the function $h\iin\luc(G)$ by $h(x):=x(1)$ for $x\iin G$.
Let ${m}_0$ be a cluster point of the sequence $(y_k\inv)$ in $\gluc$.
Then
\[
h(y_k\inv \circ y_j) = y_k\inv(y_j(1)) = y_k\inv(1/j)
                    = 2 + \frac{\frac{1}{j} - \frac{2}{k}}{3-\frac{2}{k}}
\]
for all $k\geq 2j$, and so
\[
{m}_0 \conv y_j (h) = \lim_k h(y_k\inv \circ y_j) = 2 + 1/3j.
\]
On the other hand,
\[
\rt_y h(x) = y(\lt_x h) = \lim_j \lt_x h(y_j) = \lim_j h(x\circ y_j) =
            \lim_j x(y_j(1))  = 0
\]
for all $x\iin G$, and so
\[
{m}_0 \conv y (h)      = {m}_0(\rt_y h) = 0.
\]
Hence ${m}_0\conv y(h) \neq \lim_j {m}_0\conv y_j(h)$.
\qed
\end{example}

We do not know whether Corollary~\ref{cor:jointMG} holds without the
boundedness restriction; that is, whether convolution is jointly UEB
continuous on $\M(G)$ for every topological group $G$.
However, we now show that convolution is jointly UEB continuous even
on $\luc(G)^\ast$ for every locally compact SIN group $G$.
The proof relies on the characterisation of equi-$\luc$ sets
in section~\ref{s:characterisation}.

\begin{theorem}
    \label{th:cconvLCA}
For every locally compact SIN group $G$,
convolution is jointly UEB continuous on $\luc(G)^\ast$.
\end{theorem}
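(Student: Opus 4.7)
The plan is to combine the two earlier structural theorems --- Theorem~\ref{thm:equi-factor} (factorisation of equi-$\luc$ sets) and Theorem~\ref{thm:ueb=rm} (identification of UEB with the right-multiplier topology) --- so as to recast joint UEB continuity as a convergence question inside the Banach space $\lone(G)$, where the locally compact SIN hypothesis can then be brought to bear.

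First, I would use Theorem~\ref{thm:ueb=rm} to express the UEB topology on $\luc(G)^*$ as the locally convex topology generated by the seminorms $m\mapsto\norm{\phi\conv m}$, $\phi\in\lone(G)$. Because $\lone(G)$ is a two-sided ideal in $\luc(G)^*$ and the left Arens product is associative, convolution is immediately separately UEB continuous; the bilinear identity $m n-m_0 n_0=(m-m_0)n_0+m_0(n-n_0)+(m-m_0)(n-n_0)$ then reduces joint continuity at $(m_0,n_0)$ to joint continuity at $(0,0)$, so from now on I assume $m_\alpha,n_\alpha\to 0$ in UEB.

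Next, fix an equi-$\luc$ set $\F$ and apply Theorem~\ref{thm:equi-factor} to write $\F=f\conv\H$ with $f\in\lone(G)$ and $\H\sub\luc(G)$ norm-bounded. Using the adjoint identity $\pair{M,f\conv h}=\pair{\widetilde f\conv M,h}$ together with associativity of the Arens product,
\[
\sup_{g\in\F}|\pair{m\conv n,g}|\ \le\ C\,\norm{\mu\conv n}_{\lone},
\qquad \mu:=\widetilde f\conv m\in\lone(G),\ C:=\sup_{h\in\H}\norm{h}_\infty .
\]
Theorem~\ref{thm:ueb=rm} says that UEB convergence $m_\alpha\to 0$ is precisely $\norm{\mu_\alpha}_{\lone}\to 0$ (for every $f$), so the theorem is reduced to the following bilinear claim: if $\mu_\alpha\to 0$ in $\lone$-norm and $n_\alpha\to 0$ in UEB, then $\norm{\mu_\alpha\conv n_\alpha}_{\lone}\to 0$.

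I expect this last bilinear estimate to be the main obstacle: UEB convergence of $(n_\alpha)$ does not entail norm-boundedness of the net, so the naive Banach-algebra bound $\norm{\mu_\alpha\conv n_\alpha}_{\lone}\le\norm{\mu_\alpha}_{\lone}\norm{n_\alpha}$ is useless. To close the argument I would invoke the locally compact SIN hypothesis, which supplies $\lone(G)$ with a bounded \emph{central} approximate identity. A Cohen/Varopoulos-type factorisation --- in the spirit of Theorem~\ref{thm:compact-factor} applied to the relatively compact set $\{\mu_\alpha\}\cup\{0\}$ --- produces a decomposition $\mu_\alpha=a\conv\sigma_\alpha$ with $a\in\lone(G)$ fixed and $\sigma_\alpha\to 0$ in $\lone$. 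Writing $\mu_\alpha\conv n_\alpha=a\conv(\sigma_\alpha\conv n_\alpha)$, convolution with the fixed $a$ is a bounded operator on $\lone(G)$, and centrality of the approximate identity lets one upgrade the pointwise right-multiplier convergence $\norm{\phi\conv n_\alpha}_{\lone}\to 0$ to uniform control over the relatively compact set $\{\sigma_\alpha\}\cup\{0\}$, yielding $\norm{\mu_\alpha\conv n_\alpha}_{\lone}\to 0$ as required.
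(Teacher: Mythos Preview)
Your overall architecture is right and very close to the paper's: reduce via Theorem~\ref{thm:ueb=rm} to the seminorms $m\mapsto\norm{\phi\conv m}$, and then use the SIN hypothesis through a central bounded approximate identity and Cohen factorisation. The reduction to joint continuity at $(0,0)$ and the reformulation as the bilinear claim ``$\mu_\alpha\to 0$ in norm and $n_\alpha\to 0$ in UEB imply $\norm{\mu_\alpha\conv n_\alpha}\to 0$'' are both fine (modulo the harmless slip of writing $\norm{\cdot}_{\lone}$ where the $\luc(G)^\ast$ norm is meant).

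The gap is in your final paragraph. First, the set $\{\mu_\alpha\}\cup\{0\}$ need \emph{not} be relatively compact: $(\mu_\alpha)$ is a net, not a sequence, and a norm-null net can have arbitrarily large (even unbounded) range outside any tail, so Theorem~\ref{thm:compact-factor} does not apply. Second, even granting a factorisation $\mu_\alpha=a\conv\sigma_\alpha$ with $\sigma_\alpha\to 0$, your appeal to ``uniform control over the relatively compact set $\{\sigma_\alpha\}$'' would require the operators $\psi\mapsto\psi\conv n_\alpha$ to be equicontinuous, which is exactly what fails when $(\norm{n_\alpha})$ is unbounded; centrality of the approximate identity does not supply that equicontinuity.

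The repair is to factorise the \emph{fixed} element rather than the moving net, and this is precisely what the paper does. Using the central bounded approximate identity one has $\lone(G)=\lone(G)\conv\Zlone(G)$, so write $\widetilde f=g\conv z$ with $z\in\Zlone(G)$ central in $\luc(G)^\ast$. Then
\[
\mu_\alpha\conv n_\alpha
=\widetilde f\conv m_\alpha\conv n_\alpha
=g\conv z\conv m_\alpha\conv n_\alpha
=(g\conv m_\alpha)\conv(z\conv n_\alpha),
\]
and now \emph{both} factors tend to $0$ in norm by Theorem~\ref{thm:ueb=rm}, so the product does too. In other words, centrality is used not to upgrade pointwise convergence to uniform convergence, but to slide a fixed $\lone$ element past $m_\alpha$ so that it converts the UEB convergence of $n_\alpha$ into norm convergence of $z\conv n_\alpha$.
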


\begin{proof}
By Theorem~\ref{thm:ueb=rm}
it is enough to show that the convolution on
is jointly continuous with respect to the
topology generated by the seminorms $m\mapsto\norm{f\conv m}$,
$f\in\lone(G)$.
Let $m_\alpha\to m$ and $n_\alpha\to n$ be nets
converging in this topology.

Let $\Zlone(G)$ denote the centre of $\lone(G)$.
Then $\Zlone(G)$ consists of all central functions in $\lone(G)$
(i.e.\ functions $f\in\lone(G)$
such that $f(st) = f(ts)$ for almost every $s,t\in G$).
Since $\lone(G)$ is weak*-dense in $\luc(G)^*$ and
the convolution from left by an element in $\lone(G)$
is weak*-continuous on $\luc(G)^*$, it follows
that $\Zlone(G)$ is contained in the centre of $\luc(G)^*$ as well.
Now $\lone(G)$ has a bounded approximate
identity consisting of central functions because $G$ is SIN.
Hence by Cohen factorisation theorem, $\lone(G) = \lone(G)\conv\Zlone(G)$.

Let $f\iin\lone(G)$ be arbitrary and factorise $f = g\conv h$
where $g\iin\lone(G)$, $h\iin\Zlone(G)$.
Then
\[
f\conv (m_\alpha\conv n_\alpha) =
(g\conv m_\alpha)\conv (h\conv n_\alpha)
\]
because $h\in \Zlone(G)$. Since $g\conv m_\alpha\to g\conv m$ and
$h\conv n_\alpha\to h\conv n$ in norm,
it follows that
$f\conv (m_\alpha\conv n_\alpha) \to f\conv (m\conv n)$ in norm,
as required.
\end{proof}

\begin{acknow}
Jan Pachl is grateful for being able to work in the supportive environment
at the Fields Institute.
Pekka Salmi thanks Nico Spronk for support during a postdoctoral stay
at the University of Waterloo.
Later support by Emil Aaltonen Foundation is also gratefully acknowledged.
\end{acknow}

\end{document}